\theoremstyle{plain}
\newtheorem{theorem}{Theorem}
\newtheorem{lemma}[theorem]{Lemma}
\theoremstyle{remark}
\newtheorem{remark}{Remark}
\numberwithin{equation}{section}
\newcommand{\eps}{\varepsilon}
\newcommand{\N}{\mathbb{N}}
\newcommand{\R}{\mathbb{R}}
\newcommand{\supp}{\operatorname{supp}}
\newcommand{\jap}[1]{\left\langle #1 \right\rangle}
\DeclarePairedDelimiter\abs{\lvert}{\rvert}
\DeclarePairedDelimiter\norm{\lVert}{\rVert}
\begin{document}

\title[Generic norm growth of powers of multipliers]{Generic norm growth of powers of homogeneous unimodular Fourier multipliers}

\author[A. Bulj]{Aleksandar Bulj}

\address{Department of Mathematics, Faculty of Science, University of Zagreb, Bijeni\v{c}ka cesta 30, 10000 Zagreb, Croatia}
\email{aleksandar.bulj@math.hr}

\subjclass[2020]{Primary 42B15; 
Secondary 42B20} 

\keywords{Fourier multiplier, singular integral, oscillatory integral}

\begin{abstract}
For an integer $d\ge 2$, $t\in \mathbb{R}$ and a $0$-homogeneous function $\Phi\in C^{\infty}(\mathbb{R}^{d}\setminus\{0\},\mathbb{R})$, we consider the family of Fourier multiplier operators $T_{\Phi}^t$ associated with symbols $\xi\mapsto \exp(it\Phi(\xi))$ and prove that for a generic phase function $\Phi$, one has the estimate $\lVert T_{\Phi}^t\rVert_{L^p\to L^p} \gtrsim_{d,p, \Phi}\langle t\rangle ^{d\lvert\frac{1}{p}-\frac{1}{2}\rvert}$. That is the maximal possible order of growth in $t\to \pm \infty$, according to the previous work by V. Kova\v{c} and the author and the result shows that the two special examples of functions $\Phi$ that induce the maximal growth, given by V. Kova\v{c} and the author and independently by D. Stolyarov, to disprove a conjecture of Maz'ya actually exhibit the same general phenomenon.
\end{abstract}

\maketitle


\section{Introduction}
For $d\in \N$, $d\ge 2$ and $\Phi\in C^{\infty}(\R^{d}\setminus\{0\}, \R)$, a homogeneous function of degree 0, we consider a family of Fourier multiplier operators indexed by $t\in \R$, defined on the set of Schwartz functions $\mathcal{S}(\R^d)$ with
\begin{equation}
  \label{eq:T_def}
  T_{\Phi}^t f(x):=\int_{\R^d}e^{it\Phi(\xi)+i\xi \cdot x}\widehat{f}(\xi)d\xi.
\end{equation}
Since $\Phi$ is smooth and real-valued homogeneous function of degree 0, by \cite[\S III.3.5, Theorem~6]{Stein70book} we know that $T_{\Phi}^t$
has representation in the form
\[T_{\Phi}^tf=a_\Phi^t f + S_{\Phi}^tf, \quad f\in \mathcal{S}(\R^d),\]
where, denoting a spherical measure on $S^{d-1}$ with $\sigma_{d-1}$, $a_\Phi^t$ is a constant given by
\[a_\Phi^t:=\frac{1}{\sigma_{d-1}(S^{d-1})}\int_{S^{d-1}}e^{it\Phi(\xi)} d\sigma_{d-1}(\xi)\] 
and $S_{\Phi}^t$ is a singular integral operator defined on $\mathcal{S}(\R^d)$ as:
\[S_{\Phi}^tf(x):=\lim_{\eps\to 0}\int_{\abs{y}\ge \eps}\frac{\Omega_\Phi ^t(\frac{y}{\abs{y}})}{\abs{y}^d}f(x-y)dy\]
for some function $\Omega_\Phi^t\in C^{\infty}(S^{d-1},\R)$  that satisfies $\int_{S^{d-1}}\Omega_{\Phi}^t d\sigma_{d-1} =0$.

It is obvious from the definition that $\abs{a_\Phi^t}\leq 1$ and by the Calderon--Zygmund theory (see \cite[Theorem~1]{CZ56} or \cite[\S II.4.2, Theorem~3]{Stein70book}) we know that the singular integral operator $S_{\Phi}^t$ extends to an $L^p\to L^p$ bounded operator for all $p\in (1,\infty)$.

The question of the asymptotic behavior of $t\mapsto \norm{T_\Phi^t}_{L^p\to L^p}$ as $t\to\pm \infty$ was implicitly raised in \cite{MH76} and explicitly posed by V. Maz'ya in \cite[\S 4.2]{Mazya75problems}, where he asked whether there exists a finite constant $C_{d,p,\Phi}$ for which the following estimate holds for all homogeneous functions  $\Phi\in C^{\infty}(\R^d\setminus\{0\}, \R)$ of degree 0 and all $t\in \R$:
\[\norm*{T_{\Phi}^t}_{L^p\to L^p}\leq C_{d,p,\Phi} \jap{t}^{(d-1)\abs{\frac{1}{2}-\frac{1}{p}}}.\] 
The question was negatively resolved by V. Kova\v{c} and the author in \cite{BK23} for even $d$, by the following theorem. Since sharpness in $p\in (1,\infty)$ was a substantial part of \cite{BK23}, we will denote $p^{*}:=\max\{p,\frac{p}{p-1}\}$ and write the full statement of the theorem.
\begin{theorem}[\cite{BK23}, Theorem~2]
\label{thm:bk23}
    \begin{enumerate}[(a)]
        \item Fix an integer $d\ge 2$ and a homogeneous function $\Phi\in C^{\infty}(\R^{d}\setminus\{0\}, \R)$ of degree 0. There exists a finite constant $C_{d,\Phi}$ such that for every exponent $p\in (1,\infty)$ and $t\in\R$ we have
        \[\norm*{T^{t}_{\Phi}}_{L^p\to L^p}\leq C_{d,\Phi} (p^{*}-1)\jap{t}^{d\abs{\frac{1}{2}-\frac{1}{p}}}.\]

        \item Fix an even integer $d\ge 2$. There exists a homogeneous function $\Phi\in C^{\infty}(\R^{d}\setminus\{0\},\R)$ of degree 0 and a constant $c_{d,\Phi}>0$ such that for every exponent $p\in(1,\infty)$ and every nonzero integer $k$ we have
        \[\norm*{T^{k}_{\Phi}}_{L^p\to L^p}\geq c_{d,\Phi}(p^{*}-1) \abs{k}^{d\abs{\frac{1}{2}-\frac{1}{p}}}.\]
    \end{enumerate}
\end{theorem}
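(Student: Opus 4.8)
\section{Proof strategy}

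Write $m_t(\xi)=e^{it\Phi(\xi)}$ for the symbol of $T_\Phi^t$, fix a Littlewood--Paley function $\psi\in C_c^\infty(\{\tfrac12<\abs{\xi}<2\})$, and set $K_j^t:=\mathcal F^{-1}\bigl(m_t\,\psi(2^{-j}\cdot)\bigr)$; the $0$-homogeneity of $\Phi$ gives the exact dilation identity $K_j^t=2^{jd}K_0^t(2^j\cdot)$, so every kernel question reduces to the single oscillatory integral $K_0^t(y)=\int_{\R^d}e^{it\Phi(\xi)+iy\cdot\xi}\psi(\xi)\,d\xi$. Since $\abs{m_t}\equiv1$ one has $\norm{T_\Phi^t}_{L^2\to L^2}=1$, and $(T_\Phi^t)^*=T_\Phi^{-t}$, so in both parts it suffices to treat $1<p<2$, the range $2<p<\infty$ following by duality. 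For (a) the plan is to prove the weak endpoint estimate
\[
  \norm{T_\Phi^t}_{L^1\to L^{1,\infty}}\lesssim_{d,\Phi}\jap{t}^{\,d/2}
\]
and interpolate with the $L^2$ bound; for (b) the plan is to fix one explicit phase and, testing $T_\Phi^k$ on a bump at the origin, reduce everything to the lower bound $\norm{\Omega_\Phi^k}_{L^p(S^{d-1})}\gtrsim_{d,\Phi}\abs{k}^{\,d/2}$ for the associated Calder\'on--Zygmund profile.

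\emph{Part (a).} Marcinkiewicz interpolation between the above weak $(1,1)$ bound and the strong $L^2$ bound with constant $1$, followed by a routine reinterpolation near $p=2$, yields exactly $\norm{T_\Phi^t}_{L^p\to L^p}\lesssim_d\tfrac1{p-1}\jap{t}^{\,d(1/p-1/2)}=(p^*-1)\jap{t}^{\,d\abs{1/2-1/p}}$ for $1<p<2$. For the weak $(1,1)$ bound I would run the Calder\'on--Zygmund decomposition at height $\lambda$ but keep the kernel split into the pieces $K_j^t$, exploiting that $\norm{T_{K_j^t}}_{L^2\to L^2}=\norm{m_t\,\psi(2^{-j}\cdot)}_\infty\le1$ \emph{uniformly} in $j$ and $t$; the sizes of the pieces come from stationary/non-stationary phase for $t\Phi(\xi)+y\cdot\xi$. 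At a critical point $\xi_*$ one has $t\nabla\Phi(\xi_*)=-y$, and Euler's identity for the $0$-homogeneous $\Phi$ forces $\nabla^2\Phi(\xi_*)\,\xi_*=-\nabla\Phi(\xi_*)=y/t$, so the Hessian $t\nabla^2\Phi(\xi_*)$ has $d-1$ eigenvalues of size $\sim\abs{t}$ and one of size $\sim\abs{y}$; this gives $\abs{K_0^t(y)}\lesssim_{d,\Phi}\jap{t}^{-(d-1)/2}\jap{y}^{-1/2}$ for $\abs{y}\lesssim\jap{t}$ with rapid decay beyond (and likewise for $\nabla_yK_0^t$), and by the dilation identity these become scale-localized bounds for the $K_j^t$ (consistent with, but sharper to use than, $\abs{K^t(y)}\lesssim_{d,\Phi}\jap{t}^{\,d/2}\abs{y}^{-d}$). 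Feeding them into the usual splitting---$L^2$ for the ``good'' part, and the mean-zero cancellation of the atoms $b_Q$ for the ``bad'' part, matching the scale $2^{-j}$ of $K_j^t$ against the sidelength of $Q$---and summing the geometric series in $j$ delivers the weak $(1,1)$ bound with constant $\jap{t}^{\,d/2}$.

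\emph{Part (b).} For each even $d$ I would take one explicit $\Phi$---for instance a normalized product of generic linear forms, $\Phi(\xi)=\prod_{j=1}^d\langle\xi,v_j\rangle/\abs{\xi}^{\,d}$, or either of the examples of Kova\v{c}--Bulj or Stolyarov---so that $\phi=\Phi|_{S^{d-1}}$ is a nonconstant smooth function with only nondegenerate critical points. Testing $T_\Phi^k$ on a fixed nonnegative bump $f$ supported near the origin, one gets
\[
  T_\Phi^k f(x)=\widehat{f}(0)\,\Omega_\Phi^k\!\left(x/\abs{x}\right)\abs{x}^{-d}+O_\Phi\!\bigl(\jap{k}^{\,d/2+1}\abs{x}^{-d-1}\bigr),
\]
so once $\abs{\Omega_\Phi^k}\gtrsim_\Phi\abs{k}^{\,d/2}$ holds on a set of directions of measure $\gtrsim_\Phi1$ (while $\norm{\Omega_\Phi^k}_{L^\infty(S^{d-1})}\lesssim_\Phi\abs{k}^{\,d/2}$ from the kernel bound above), the main term dominates for $\abs{x}\gtrsim_\Phi\abs{k}$, and with $\int_{\abs{x}\ge\abs{k}}\abs{x}^{-dp}\,dx\simeq\abs{k}^{-d(p-1)}/(p-1)$ and $(d(p-1))^{-1/p}\gtrsim_d p^*-1$ one obtains
\[
  \norm{T_\Phi^k}_{L^p\to L^p}\;\gtrsim_{d,\Phi}\;(p^*-1)\,\abs{k}^{\,d\abs{1/2-1/p}}\qquad(1<p<2,\ k\in\Z\setminus\{0\}),
\]
with $p>2$ following by duality. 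The point $\abs{\Omega_\Phi^k}\gtrsim_\Phi\abs{k}^{\,d/2}$ on a fixed-measure set is where the explicit, even-dimensional choice matters: one inverts the kernel-to-multiplier correspondence on the sphere, which by the Bochner--Hecke formula acts on degree-$\ell$ spherical harmonics as multiplication by $\lambda_\ell=i^{-\ell}\pi^{d/2}\Gamma(\ell/2)/\Gamma((\ell+d)/2)$, so that for even $d$ the inverse symbol $\lambda_\ell^{-1}$ is $i^{\ell}$ times a polynomial of degree $d/2$ in $\ell$; hence $\Omega_\Phi^k$ is the operator with symbol $i^{\ell}$ applied to a $(d/2)$-th order (pseudo)differential operator acting on $e^{ik\phi}-a_\Phi^k$, a function whose leading part in $k$ is $\sim\abs{k}^{\,d/2}\abs{\nabla\phi}^{d/2}e^{ik\phi}$ (of full-measure size), and the explicit $\phi$ is chosen with a symmetry tailored so that the twist by $i^{\ell}$ does not collapse this lower bound in $L^p(S^{d-1})$.

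\emph{Main obstacles.} The hardest step in (a) is keeping the weak $(1,1)$ constant \emph{exactly} $\jap{t}^{\,d/2}$: crude pointwise (H\"ormander-type) kernel estimates lose a full power of $t$---since $\Omega_\Phi^t$ oscillates, its size grossly overstates its effect---so one must use the oscillation of $K^t$, which forces the Fefferman-style argument keeping the $K_j^t$ separate and using their uniform $L^2$ bounds; a secondary point is handling possibly degenerate critical points of $\Phi|_{S^{d-1}}$ with constants depending on $\Phi$ but not on $t$. The hardest step in (b) is the last one: verifying that the stationary-phase main term does not vanish on a positive-measure set of directions and, above all, upgrading the $L^2$ lower bound $\norm{\Omega_\Phi^k}_{L^2(S^{d-1})}\simeq_\Phi\abs{k}^{\,d/2}$ (which holds for any nonconstant smooth $\phi$) to an $L^p$ lower bound for $p$ near $1$ despite the twist $i^{\ell}$, which fails to be $L^p$-bounded on $S^{d-1}$ for general $p$---this is precisely why one must use a carefully chosen rather than arbitrary phase, and where the restriction to even $d$ enters.
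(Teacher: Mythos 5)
This statement is cited from \cite{BK23} and is not proved in the present paper; the only information the paper gives about its proof is the one-sentence description that \cite{BK23} ``used spherical harmonics to reduce the problem to $L^1\to L^{1,\infty}$ estimate for singular integrals with rough kernels of Seeger and Tao.'' Against that benchmark your outline of part (a) takes a genuinely different route (direct Littlewood--Paley decomposition plus a Fefferman-style Calder\'on--Zygmund argument) and it has a real gap: the pointwise kernel bound $\abs{K_0^t(y)}\lesssim \jap{t}^{-(d-1)/2}\jap{y}^{-1/2}$ with subsequent ``rapid decay beyond $\abs{y}\sim\jap{t}$'' comes from stationary phase and presupposes that the critical points of $\xi\mapsto t\Phi(\xi)+y\cdot\xi$ are nondegenerate, i.e.\ that $\Phi\vert_{S^{d-1}}$ is (essentially) Morse. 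Theorem \ref{thm:bk23}(a), however, is asserted for \emph{every} smooth $0$-homogeneous $\Phi$, including ones whose restriction to the sphere is constant on open sets or has arbitrarily flat critical points, for which the kernel does not obey your pointwise estimate. You label this as ``a secondary point,'' but it is the central obstruction and is precisely why \cite{BK23} routes through the Seeger--Tao weak $(1,1)$ theorem for rough kernels (which needs only size information about the profile $\Omega_\Phi^t$, not pointwise decay of the full kernel) rather than through a pointwise stationary-phase bound. Without either recovering a substitute for the kernel decay under degeneracy or invoking a rough-kernel theorem, the summation ``in $j$'' you describe does not close.

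For part (b) your mechanism (test on a bump, lower-bound $\Omega_\Phi^k$ on the sphere, and use the Bochner--Hecke eigenvalues to explain why even $d$ helps, since $\lambda_\ell^{-1}$ becomes $i^\ell$ times a degree-$d/2$ polynomial in $\ell$) is indeed the shape of the \cite{BK23} argument and the back-of-envelope $L^p$ computation that produces the $(p^*-1)$ factor is right. But the entire burden of the proof is hidden in the final clause, ``the explicit $\phi$ is chosen with a symmetry tailored so that the twist by $i^{\ell}$ does not collapse this lower bound in $L^p(S^{d-1})$,'' which is exactly the point you also flag as the hardest. As written this is not an argument but a promissory note: the twist $\ell\mapsto i^\ell$ is a genuinely unbounded spherical multiplier for $p\neq 2$, and showing it does not destroy the lower bound requires constructing the specific $\Phi$ and carrying out the computation. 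So both parts contain the right high-level ingredients (and part (b) recovers the reason for the evenness restriction), but neither part reaches a complete proof.
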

One has to note that the answer to the particular case $d=2$ of Maz'ya's question follows from the work of Dragi\v{c}evi\'{c}, Petermichl and Volberg \cite{DPV06}. This fact was overlooked in \cite{Mazya75problems}, when the question was formulated.

Furthermore, D. Stolyarov \cite{Sto22} independently, and using different techniques, proved the part (a) of Theorem \ref{thm:bk23} and showed the existence of a function $\Phi$ as in the part (b) for all $d\ge 2$, but without the sharp dependence in $p\in (1,\infty)$ in both parts. 

Both papers, \cite{BK23} and \cite{Sto22}, have short proofs for the upper bound. In \cite{BK23} authors used spherical harmonics to reduce the problem to $L^1\to L^{1,\infty}$ estimate for singular integrals with rough kernels of Seeger and Tao, while in \cite{Sto22} the author reduced the problem to the sharp version of the H\"{o}rmander--Mikhlin multiplier theorem.

However, in \cite{BK23} we gave an extremely specific class of functions $\Phi$ for which the bound is asymptotically sharp for $t\to\pm \infty$ and in \cite{Sto22} the author gave one particular function for which the bound is asymptotically sharp. Both papers have relatively long proofs from which the cause of the exact worst asymptotics was not apparent. In this paper we give a short proof of existence of a general phenomenon that drives the growth of powers of norms for a generic symbol, proving that all, when $d=2$, and ``almost all", when $d\ge 3$,  unimodular homogeneous symbols of degree 0 are counterexamples to the asymptotic order of growth conjectured by V. Maz'ya in \cite[\S 4.2]{Mazya75problems} and, in fact, exhibit the worst possible asymptotics. More precisely, we prove the following theorem --- for the definition of Japanese bracket $\jap{\cdot}$ see \ref{subs:notation} below and for the definition of Whitney topology see \ref{subs:preliminaries} below. 

\begin{theorem}
\label{thm:intro}
Let $d\in \N$, $d\ge 2$, $p\in (1,\infty)$ and $t\in \R$. 
\begin{enumerate}[(a)]
    \item 
    \label{thm:intro_a}
    For $d=2$ and any nonconstant homogeneous function $\Phi\in C^{\infty}(\R^2\setminus\{0\},\R)$ of degree 0 there exists a constant $c_{d,p,\Phi}>0$ such that
    \[\norm*{T^{t}_{\Phi}}_{L^p\to L^p}\ge c_{d,p,\Phi} \jap{t}^{2\abs{\frac{1}{2}-\frac{1}{p}}}.\]
    \item 
    \label{thm:intro_b}
    For $d\ge 3$, there is a dense open set $\mathcal{G}$ in the Whitney topology on $C^{\infty}(S^{d-1},\R)$ such that for all $\phi\in\mathcal{G}$ there exists a constant $c_{d,p,\phi}>0$ for which the 0-homogeneous extension $\Phi\in C^{\infty}(\R^d\setminus\{0\},\R)$ of $\phi$, defined as $\Phi(\xi):=\phi(\frac{\xi}{\abs{\xi}})$, satisfies
    \[\norm*{T^{t}_{\Phi}}_{L^p\to L^p}\ge c_{d,p,\phi} \jap{t}^{d\abs{\frac{1}{2}-\frac{1}{p}}}.\]
\end{enumerate}
\end{theorem}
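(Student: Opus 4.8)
The plan is to extract the worst-case growth from a \emph{local} model near a point of the sphere where the phase $\Phi$ behaves nondegenerately, using a duality / localization argument rather than the global spherical-harmonic machinery of \cite{BK23}. The guiding heuristic is that if $\phi=\Phi|_{S^{d-1}}$ has, near some $\xi_0\in S^{d-1}$, a critical point of its restriction to the sphere with nondegenerate Hessian (a Morse point), then rescaling $\xi\mapsto \xi_0+t^{-1/2}\eta$ turns $e^{it\Phi(\xi)}$ into a symbol that, after the rescaling, looks like a fixed Gaussian-type oscillatory symbol $e^{i Q(\eta)}$ with $Q$ a nondegenerate quadratic form in all $d-1$ tangential directions; the extra radial direction is harmless because $\Phi$ is $0$-homogeneous, so the phase is constant along rays. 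Testing $T_\Phi^t$ against a wave packet of frequency-scale $t^{1/2}$ adapted to $\xi_0$ then produces, by stationary phase, a chirp-like output whose $L^p$ norm is larger than its $L^{p'}$-type input by exactly the factor $\langle t\rangle^{d|\frac12-\frac1p|}$: each of the $d-1$ tangential Gaussian directions contributes $\langle t\rangle^{\frac12|\frac1p-\frac12|}\cdot\langle t\rangle^{\frac12|\frac1p-\frac12|}$ from the standard computation that a Gaussian modulated by a quadratic chirp of parameter $\lambda$ has $\|\cdot\|_p / \|\cdot\|_{p'}\sim \lambda^{|\frac1p-\frac12|}$, and the radial direction contributes one more power. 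First I would make this precise: pick a bump $\psi$ supported in a tiny cap around $\xi_0$, set $g_t$ to be the inverse Fourier transform of $\psi((\xi-\xi_0)t^{1/2})$ (a normalized wave packet), compute $T_\Phi^t g_t$ by Taylor-expanding $\Phi$ to second order on the cap with uniformly controlled error, and recognize the result as (a smooth localization of) the operator $e^{iQ(D)}$ acting on a fixed Schwartz function; then conclude $\|T_\Phi^t g_t\|_p \gtrsim \langle t\rangle^{d|\frac1p-\frac12|}\|g_t\|_p$ by the known sharp behavior of the Gaussian multiplier $e^{i|D|^2}$-type estimates (or by a direct stationary-phase computation of the kernel, which is an explicit Gaussian).

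The remaining point is to guarantee that such a nondegenerate tangential critical point exists for the $\Phi$ under consideration. For part (\ref{thm:intro_a}), $d=2$: here $S^{1}$ is one-dimensional, $\phi:S^1\to\R$ is a nonconstant smooth periodic function, hence attains an interior maximum, and at a maximum $\phi''\le 0$; one then splits into the case $\phi''<0$ at that point (a genuine Morse maximum, and the argument above applies with the single tangential direction plus the radial direction giving the power $2|\frac1p-\frac12|$) and the degenerate case, which can be handled either by perturbing to a nearby nondegenerate critical point and using lower semicontinuity of the operator norm in $\Phi$ together with the first case applied at a slightly shifted parameter $t$, or by noting that a higher-order critical point $\phi(\xi)-\phi(\xi_0)\sim -c\theta^{2m}$ gives, by the same rescaling with exponent $t^{-1/(2m)}$, a growth $\langle t\rangle^{(1-\frac1{2m})|\cdots|+\cdots}$ that is still at least $\langle t\rangle^{2|\frac12-\frac1p|}$ — in fact one must be a little careful and I would instead simply use that any nonconstant $C^\infty$ periodic $\phi$ has \emph{some} point where $\phi''\ne 0$ (if $\phi''\equiv 0$ then $\phi$ is affine on $\R$, impossible by periodicity and nonconstancy), and localize there. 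For part (\ref{thm:intro_b}), $d\ge 3$: I would let $\mathcal{G}$ be the set of $\phi\in C^\infty(S^{d-1},\R)$ that are \emph{Morse functions} on $S^{d-1}$, i.e.\ all critical points of $\phi$ are nondegenerate; this is a classical dense open set in the Whitney topology (the transversality / Thom–Morse theorem), it is nonempty, and every Morse $\phi$ is nonconstant hence has at least one critical point $\xi_0$ with nondegenerate Hessian, so the wave-packet argument yields the full power $\langle t\rangle^{d|\frac12-\frac1p|}$. Openness and density of Morse functions in the Whitney topology on a compact manifold is exactly the statement I would cite from standard differential-topology references.

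I expect the main obstacle to be the \emph{error control in the Taylor expansion under rescaling}: on the cap of radius $t^{-1/2}$ the quadratic part of $t\Phi$ is $O(1)$ and the cubic remainder is $O(t^{-1/2})$, so the remainder is small \emph{pointwise}, but one needs this smallness to survive the $L^p$ estimate, which for $p\ne 2$ is not automatic because $e^{iQ(D)}$ does not map wave packets to wave packets — its output spreads to scale $t^{1/2}$ in space. The clean way around this, which I would adopt, is to work on the Fourier side: $\widehat{T_\Phi^t g_t}(\xi) = e^{it\Phi(\xi)}\psi((\xi-\xi_0)t^{1/2})$, and after the change of variables $\xi = \xi_0 + t^{-1/2}\eta$ this is $e^{i t\Phi(\xi_0) } e^{i Q(\eta) + i R_t(\eta)}\psi(\eta)$ with $R_t\to 0$ in $C^\infty$ on the fixed compact support of $\psi$; then $T_\Phi^t g_t$ is, up to the harmless unimodular constant and a fixed modulation/dilation, the function whose Fourier transform is $e^{iQ(\eta)}\psi(\eta)\cdot e^{iR_t(\eta)}$, and I compute the $L^p$ norm of the main term $(e^{iQ(\cdot)}\psi)^\vee$ — an explicit Schwartz function (a Fresnel/Gaussian integral) whose $L^p$ norm against $L^{p}$ size of $g_t$ realizes the claimed ratio after undoing the scaling — while the perturbation $e^{iR_t}=1+o(1)$ in $C^\infty$ only multiplies by a bounded-away-from-zero factor for large $t$. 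Carefully bookkeeping the powers of $t$ produced by (i) the normalization $\|g_t\|_p\sim t^{d/2}\cdot t^{-d/p'}\cdot(\text{const})$ of the wave packet, (ii) the dilation in the output, and (iii) the intrinsic $|\frac1p-\frac12|$-scaling of the Gaussian multiplier, and checking they sum to $d|\frac1p-\frac12|$, is the one genuinely delicate computation; everything else is soft (Morse theory, semicontinuity, $0$-homogeneity killing the radial direction).
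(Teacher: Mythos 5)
The soft ingredients of your proposal (reducing part (b) to Morse functions via openness and density in the Whitney topology, reducing part (a) to finding a suitable point on $S^1$, and reducing the theorem to a local oscillatory-integral estimate near a distinguished $\xi_0\in S^{d-1}$) agree with the paper. The hard part, however, has a genuine gap at exactly the place you flag as delicate, and I do not think your wave-packet setup can be repaired without a new idea. The problem is that the Hessian $H\Phi(\xi_0)$ of a $0$-homogeneous $\Phi$ at a critical point $\xi_0$ of $\Phi|_{S^{d-1}}$ is a \emph{singular} $d\times d$ matrix: $\nabla\Phi(\xi)\cdot\xi=0$ gives $H\Phi(\xi)\xi=-\nabla\Phi(\xi)^\top$, which vanishes at a critical point, so $\xi_0\in\ker H\Phi(\xi_0)$. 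Thus the rescaled phase $Q(\eta)=\tfrac12\eta^\top H\Phi(\xi_0)\eta$ only oscillates in the $d-1$ tangential directions. Carrying out your own computation on the Fourier side with $\widehat{g_t}(\xi)=\psi((\xi-\xi_0)t^{1/2})$ and $y=t^{-1/2}x$ gives $T_\Phi^t g_t(x)=t^{-d/2}e^{i\xi_0\cdot x+it\Phi(\xi_0)}G_t(t^{-1/2}x)$ with $G_t(y)=\int e^{iy\cdot\eta}e^{iQ(\eta^\perp)}e^{iR_t(\eta)}\psi(\eta)\,d\eta$, and since $Q$ is a \emph{fixed} (order $1$) quadratic form on a compact set, $G_t$ is a Schwartz function with $\norm{G_t}_p\sim 1$. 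Hence $\norm{T_\Phi^t g_t}_p\sim t^{-d/2}(t^{1/2})^{d/p}\sim\norm{g_t}_p$: the wave packet at scale $t^{-1/2}$ is exactly the scale on which the multiplier acts like a bounded Schwartz symbol, and there is \emph{no} growth at all. The sentence ``the radial direction contributes one more power'' is precisely where the argument fails: with your choice of $g_t$ the radial direction contributes nothing (the symbol is constant radially), and the tangential directions contribute nothing either because the chirp has been normalized to unit size.

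The paper's mechanism for the extra power is different in kind: it keeps the test function $f$ \emph{fixed}, examines $T_\Phi^t f(tx)$, and performs the change of variables $\xi=\Lambda(\xi')=\xi_d'(\xi_-',1)$ so that the \emph{combined} phase $\Phi(\xi)+\xi\cdot x$ becomes $\phi(\xi_-')+\xi_d'(\langle\xi_-',x_-\rangle+x_d)$, whose $\xi'$-Hessian is $\begin{pmatrix}H\phi(\xi_-') & x_- \\ x_-^\top & 0\end{pmatrix}$. The off-diagonal $x_-$ blocks repair the radial degeneracy, producing a genuinely nondegenerate $d$-dimensional stationary point for $x$ in an open set $U$ of unit measure, hence $|T_\Phi^t f(x)|\sim t^{-d/2}$ on a set of $x$'s of measure $\sim t^d$ and $\norm{T_\Phi^t f}_p\gtrsim t^{d/p-d/2}$. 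So the decisive interaction is between $\Phi$ and the spatial phase $\xi\cdot x$, which in your proposal collapses to a harmless translation after passing to $y=t^{-1/2}x$. Your $d=2$ fallback is also off target for a similar reason: at a point with $\phi''\ne0$ that is not a critical point the rescaled phase has a large linear term, which only translates the output and produces no growth; the paper's $d=2$ case exploits instead $\phi'\ne0$ somewhere (always true for nonconstant $\phi$) and the observation that the $2\times2$ Hessian above has determinant $-x_1^2\ne0$ regardless of $\phi''$.
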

In topology, the property that holds on a dense open set (or, more generally, on the complement of a countable union of nowhere dense sets) is called generic. Therefore, since a 0-homogeneous function on $\R^{d}\setminus\{0\}$ is uniquely defined by its restriction on the sphere $S^{d-1}$, the previous theorem says that multipliers associated with powers of a generic 0-homogeneous unimodular symbol exhibit the asymptotically maximal possible order of growth of $L^p\to L^p$ norms.

The part \eqref{thm:intro_b} of \ref{thm:intro} will be a consequence of the following theorem, which can be of its own interest, so we state it here. For the definition of nondegeneracy of a critical point see \ref{subs:preliminaries} below.

\begin{theorem}
\label{thm:main}
    Let $d\in \N$, $d\ge 2$, $p\in (1,\infty)$ and $t\in \R$. For a homogeneous function $\Phi \in C^{\infty}(\R^d\setminus\{0\},\R)$ of degree 0 such that $\Phi\vert_{S^{d-1}}$ has a nondegenerate local minimum or maximum, there exists a constant $c_{d,p,\Phi}>0$ such that
    \[\norm{T^t_{\Phi}}_{L^p(\R^d)\to L^p(\R^d)} \geq c_{d,p,\Phi}\jap{t}^{d\abs{\frac{1}{2}-\frac{1}{p}}}.\]
\end{theorem}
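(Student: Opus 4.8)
The plan is to extract the worst-case growth from a single nondegenerate extremum of $\Phi|_{S^{d-1}}$ by testing $T_\Phi^t$ against a wave packet adapted to that extremum, and to use duality to handle both $p>2$ and $p<2$ simultaneously via $p^*$. First I would normalize: after a rotation assume the nondegenerate local minimum of $\phi=\Phi|_{S^{d-1}}$ is at $e_d\in S^{d-1}$, so that near $\xi=e_d$ the $0$-homogeneous function $\Phi$ satisfies $\Phi(\xi)=\Phi(e_d)+Q(\xi')+O(|\xi'|^3)$ in suitable coordinates, where $\xi=(\xi',\xi_d)$ and $Q$ is a positive-definite quadratic form in the $d-1$ variables tangent to the sphere (the radial direction contributes nothing since $\Phi$ is $0$-homogeneous, so effectively $\Phi$ restricted to a neighborhood of the ray $\R_+e_d$ behaves, in the $d-1$ transverse directions and after the dilation that is harmless for a $0$-homogeneous symbol, like a genuinely $d$-dimensional nondegenerate phase once we also localize in $|\xi|$). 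The key point is that $e^{it\Phi}$ near this ray looks like $e^{itQ}$ up to lower-order terms, and multipliers of the form $e^{itQ}$ on a frequency cube are, after rescaling, exactly the operators whose $L^p\to L^p$ norm is known to grow like $\langle t\rangle^{d|\frac12-\frac1p|}$.

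The main steps, in order: (1) Fix a smooth cutoff $\chi$ supported in a small cone around $\R_+e_d$ intersected with an annulus $\{1\le|\xi|\le 2\}$, chosen so small that on its support $\Phi$ is $C^2$-close to its quadratic model and has no other critical behavior; write $T_\Phi^t = m_\chi(D) + (\text{rest})$ where $m_\chi(\xi)=\chi(\xi)e^{it\Phi(\xi)}$. (2) Lower-bound $\|T_\Phi^t\|_{L^p\to L^p}$ by $\|m_\chi(D)\|_{L^p\to L^p}$ up to a harmless constant: this is the standard fact that restricting a Fourier multiplier to (smooth) frequency-localization pieces is $L^p$-bounded with constant independent of $t$, because $\chi$ is a fixed Mikhlin multiplier; here the constant depends on $p$ through $p^*-1$ but that is allowed. (3) Reduce $m_\chi(D)$ to the exact Gaussian/quadratic model: replace $e^{it\Phi}$ by $e^{it(\Phi(e_d)+Q)}$ on the support; the error has symbol $e^{itQ}(e^{itR}-1)\chi$ with $R=O(|\xi'|^3+\dots)$ the cubic remainder. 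On the support this looks bad because $tR$ need not be small, but after parabolic rescaling $\xi'\mapsto t^{-1/2}\xi'$ (and leaving $\xi_d$ alone, up to the localization) the remainder becomes $t\cdot O(t^{-3/2})=O(t^{-1/2})\to 0$ in the relevant symbol norm, so the error operator is $o(1)\cdot$ (model operator norm) — this is where a careful stationary-phase/rescaling bookkeeping is needed. (4) Finally invoke the known sharp lower bound for the model: the operator with symbol $\psi(\xi)e^{i|\xi|^2}$ (Gaussian multiplier localized to a cube) has $L^p\to L^p$ norm comparable to $1$ but the $t$-dilated version, by scaling, produces exactly the factor $\langle t\rangle^{d|\frac12-\frac1p|}$; this is precisely the content underlying the examples in \cite{BK23} and \cite{Sto22}, and can be quoted. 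Combining (2)–(4) gives $\|T_\Phi^t\|_{L^p\to L^p}\gtrsim_{d,p,\Phi}\langle t\rangle^{d|\frac12-\frac1p|}$ for $|t|$ large, and for bounded $t$ the bound is trivial (the norm is bounded below by $|a_\Phi^t|$ plus something, or simply absorb small $t$ into the constant), which yields the claim for all $t\in\R$.

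I expect the main obstacle to be step (3): controlling the passage from the true oscillatory symbol $e^{it\Phi}$ to its quadratic model \emph{uniformly in $t$} with the correct normalization. The naive Taylor expansion is useless because $t$ is large; one genuinely needs to rescale frequencies by $t^{-1/2}$ in the $d-1$ tangential directions and track how the cutoff $\chi$, the cubic remainder, and the nondegenerate Hessian of $Q$ transform, showing that the remainder symbol tends to $1$ in a multiplier norm (e.g. a Sobolev or Mikhlin-type norm) strong enough that the corresponding operator norm on $L^p$ tends to that of the model. A secondary technical point is making precise, via \ref{thm:intro}\eqref{thm:intro_a} in $d=2$ or via an explicit computation, that the quadratic model really does realize the full exponent $d|\frac12-\frac1p|$ and not $(d-1)|\frac12-\frac1p|$ — i.e.\ that localizing to a cone rather than to a full annulus does not cost the extra dimension; this is exactly the mechanism by which Maz'ya's conjectured exponent $(d-1)|\frac12-\frac1p|$ is beaten, so it is worth isolating as a lemma about the model operator before assembling the proof. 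The duality between $p$ and $p'$ is routine since $T_\Phi^t{}^* = T_{-\Phi}^t$ up to reflection and $-\Phi$ has a nondegenerate local maximum where $\Phi$ has its minimum, so the hypothesis is symmetric and it suffices to treat one of the two ranges.
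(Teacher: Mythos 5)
Your proposal correctly identifies the normalization (rotate the extremum to $e_d$, pass to the chart $\phi(\xi_-)=\Phi(\xi_-,1)$) and the idea that the growth should come from stationary-phase-type asymptotics near the critical ray, but there is a genuine gap at exactly the spot you flag yourself, and that gap is the entire content of the theorem. After localizing to a small cone around $\R_+ e_d$ intersected with the annulus $\{1\le |\xi|\le 2\}$, your Taylor expansion gives a quadratic model $Q$ in the $d-1$ \emph{tangential} variables only, because $\Phi$ is $0$-homogeneous and therefore has vanishing second radial derivative. The model operator with symbol $\psi(\xi)e^{itQ(\xi')}$ (with $\xi_d$ localized to a fixed interval) is, up to a harmless one-dimensional factor, a $(d-1)$-dimensional Schr\"odinger-type multiplier, and its $L^p\to L^p$ norm grows like $\langle t\rangle^{(d-1)|\frac12-\frac1p|}$, not $\langle t\rangle^{d|\frac12-\frac1p|}$. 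So steps (3)--(4) as written recover only Maz'ya's conjectured exponent, not the larger one. You correctly observe that ``localizing to a cone rather than to a full annulus does not cost the extra dimension'' is precisely what must be shown, but you offer no mechanism for it; quoting \cite{BK23} or \cite{Sto22} begs the question, since those papers also had to construct the extra power from scratch via nontrivial arguments.

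The paper resolves this by a change of variables in the oscillatory-integral representation of the kernel, not by comparison with a model multiplier. Writing $T_\Phi^t f(tx)=\int e^{it(\Phi(\xi)+\xi\cdot x)}\widehat f(\xi)\,d\xi$ and substituting $\xi=\Lambda(\xi')=\xi'_d(\xi'_-,1)$, the phase becomes $\Phi_x(\xi')=\phi(\xi'_-)+\xi'_d(\langle\xi'_-,x_-\rangle+x_d)$. This is a genuinely $d$-dimensional phase: its Hessian in $\xi'$ is the bordered matrix
\[
\begin{bmatrix} H\phi(\xi'_-) & x_- \\ x_-^\top & 0 \end{bmatrix},
\]
whose determinant is $-\langle (H\phi)^{-1}x_-,x_-\rangle\det H\phi\neq 0$ once $H\phi>0$ and $x_-\neq 0$. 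The off-diagonal block $x_-$, coming from the bilinear coupling of the radial coordinate $\xi'_d$ with $x$, is what restores the missing dimension; this is nowhere in your sketch. Lemma~\ref{lem:main} then produces an open set $U$ of spatial points $x$ of fixed measure where the modified phase has a unique nondegenerate critical point, so Theorem~\ref{thm:stat_phase} gives $|T_\Phi^t f(tx)|\gtrsim t^{-d/2}$ uniformly on $U$, and the base-times-height bound $\|T_\Phi^t f\|_{L^p}\gtrsim t^{d/p-d/2}\cdot |U|^{1/p}$ finishes for large $t$ (small $t$ by lower semicontinuity). Without this change of variables or an equivalent device, the parabolic-rescaling argument in your step (3) does not by itself close the gap between $(d-1)|\frac12-\frac1p|$ and $d|\frac12-\frac1p|$, and the proof as sketched would prove too little.
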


Choosing $\Phi(\xi)= \frac{\xi_1}{\abs{\xi}}$ in Theorem \ref{thm:main}, one gets the asymptotics for the so-called Riesz group (a.k.a.\@ the Poincar\'{e}--Riesz--Sobolev group) that appears, when $d=3$, in the analysis of the Navier--Stokes equations in a rotating frame; see \cite[\S2]{GIMM06}, \cite[\S4]{GIMM07}, \cite[Eq.~(1.3)]{GS13}, or \cite[Eq.~(23)]{FM12}, while the asymptotics of the same symbol when $d=2$ was studied in \cite{DPV06}. 
In fact, the lower bound for $d\geq3$ established by Stolyarov \cite{Sto22} is equivalent to the particular case of Theorem \ref{thm:main} for this particular choice of $\Phi$.

Finally, for the sake of completeness, we mention that the asymptotics of $t\mapsto \norm{T_{\Phi}^t}_{L^p\to L^p}$ is an uninteresting problem when $d=1$ because for any $\Phi$ as before and $t\in \R$, one can write $T_{\Phi}^t$ as a bounded linear combination of the identity and the Hilbert transform to get the bound $\norm{T_{\Phi}^t}_{L^p\to L^p}\lesssim 1$ for all $t\in \R$.

\subsection{Notation.}
\label{subs:notation}
For $x\in\R^d$, $x=(x_1,\dots, x_d)$ we use $\abs{x}$ to denote standard Euclidean norm, we use $\jap{x}:=(1+\abs{x}^2)^{\frac{1}{2}}$ to denote Japanese bracket and we use the notation $x_{-}:=(x_1,\dots, x_{d-1})$. We will sometimes write ``$-$" in subscript to emphasize that we are working with the first $d-1$ coordinates only. 
When working with matrices, we identify vectors in $\R^d$ with matrices of size $d\times 1$. For a matrix $A\in M_d(\R)$, we write $A>0$ to denote that it is positive definite. For a function $f:\R^d\to\R$ we use notation $\nabla f$ to denote the $1\times d$ matrix $\nabla_x f :=\left[\frac{\partial f}{\partial x_j}\right]_j$ and $Hf$ to denote the $d\times d$ Hessian matrix $H_x f := \left[\frac{\partial^2f}{\partial x_j \partial x_k}\right]_{j,k}$. We will suppress $x$ in subscript if the variables upon which we differentiate are clear from the context.
We write $B_{d}(c,r)$ to denote the ball in $\R^d$ with center $c$ and radius $r$ and use notation $S^{d-1}$ to denote the unit sphere around 0 in $\R^d$. 
If $A,B$ are some expressions, then for some (possibly empty) set of indices, $P$, the expression $A\lesssim_P B$ means that there is a constant $C>0$ depending on $P$ such that $A\leq CB$. The expression $A\sim_P B$ means that $A\lesssim_P B$ and $B\lesssim_P A$. 
Finally, since we care the most about the phase of the oscillatory integral, to suppress writing $2\pi$ in the exponent, we use the following normalization of the Fourier transform of a function $f\in L^1(\R^d)$:
\[\widehat{f}(\xi):= \frac{1}{(2\pi)^d}\int_{\R^d}f(x)e^{-i\xi \cdot x}dx.\]

\section{Idea of the proof and preliminaries}
\label{sec:idea}
Statements of the theorems are interesting only when $\abs{t}$ is large, so the idea of the proof is to follow the approach from \cite[Exercise 2.34]{TaoBook}, used to study the asymptotics of $L^p$ behavior of the Schr\"{o}dinger propagator (defined by \eqref{eq:T_def} with $\Phi(\xi)=\abs{\xi}^2$), that relies on the following observation. 
When $p\in (1,2]$, and $t>0$, the correct asymptotics can be written as $t^{-\frac{d}{2}}\times t^{\frac{d}{p}}$, what can be interpreted as base $\times$ height approximation of the $L^p$ norm of a function that resembles a bump of height $t^{-\frac{d}{2}}$ on the ball of radius $t$ with a rapidly decreasing tail, both in $x$ and $t$, outside of it.
In the case of the Schr\"{o}dinger propagator, Young's inequality for convolutions with an explicit calculation of the kernel, gives $\norm{T_{\abs{\cdot}^2}^tf}_{L^{\infty}}\lesssim t^{-\frac{d}{2}}$ and the method of nonstationary phase applied to \eqref{eq:T_def} for $x$ outside of the ball of radius $\gtrsim t$ proves that the function has a rapidly decreasing tail, both in $x$ and $t$.
The two estimates imply $\norm{T_{\abs{\cdot}^2}^tf}_{L^p}\lesssim t^{\frac{d}{p}-\frac{d}{2}}$ as $t\to \infty$ for any $p\in (1,\infty)$ and then log-convexity of $L^p$ norms applied to $p$ and $p' = \frac{p}{p-1}$, together with the fact that $T_{\abs{\cdot}^2}$ is a unitary operator on $L^2$, transfers the upper bounds to lower bounds.

On the contrary, in the case of the homogeneous multipliers of degree 0, the kernel is singular, so one cannot use Young's convolution inequality to control the $L^{\infty}$ size of $T_{\Phi}^tf$ and needs a different approach. 
The obvious method to try is the method of (non-)stationary phase, but since $\Phi$ is homogeneous of degree 0, it follows that for all $\xi\in \R^d\setminus\{0\}$
\[0=\frac{d^2}{d h^2}(\Phi(\xi+h \xi))\Big\vert_{h=0} = \xi^{\top }H \Phi(\xi)\xi, \]
implying, together with the fact that the Hessian of the function $\xi\mapsto \langle \xi,x\rangle$ is equal to $0$ for all $x,\xi \in\R^d$, that all stationary points of the phase are degenerate, so they don't fall under the scope of the classical method of stationary phase. 

We are able to circumvent this problem and reduce the problem to the classical method of stationary phase by transforming the integral representation in the case of an appropriately localized function $\widehat{f}$ using the change of variables in the integral (see \eqref{eq:T_transf} below). The reason why the method works better when $d=2$ is the fact that regularity of the Hessian of the phase in the transformed expression does not depend on the second derivative, contrary to the $d\ge 3$ case.

Applying the method of stationary phase to the modified representation \eqref{eq:T_transf} with a localized function $\widehat{f}$, it turns out again that the best $L^{\infty}$ bound for $T_{\Phi}^tf$ is bigger than $t^{-\frac{d}{2}}$, so the function $T_{\Phi}^t f$ does not resemble a bump function of height $t^{-\frac{d}{2}}$ as it did in the case of the Schr\"{o}dinger propagator.

However, using implicit and inverse function theorems we are able to show the existence of the set of $x$'s of measure $\sim t^{d}$ on which the modified phase in \eqref{eq:T_transf} is stationary and nondegenerate, so the method of stationary phase gives the required asymptotics $\abs{T^t_{\Phi}f(x)} \sim_{d,p,\Phi} t^{-\frac{d}{2}}$ on the given set. 
Using the base $\times$ height bound, this implies the required asymptotics, that is $\norm{T^t_{\Phi}f}_p \gtrsim_{d,p,\Phi} t^{\frac{d}{p} - \frac{d}{2}}$.

In the remainder of the section we quickly recall the main theorems from the Morse theory and the method of stationary phase that we will be used in the proof.

\subsection{Preliminaries}
\label{subs:preliminaries}
An introduction to differential topology and the Morse theory can be found, for example, in \cite{Hirsch76book}. We recall the basic facts needed in this paper. For a manifold $M$, we say that a function $f:M\to \R$ has nondegenerate critical point at $p\in M$ if $\nabla (f\circ \psi^{-1}) (\psi(p))=0$ and $(H(f\circ \psi^{-1}))(\psi(p))$ is a regular matrix for some local chart $\psi$ at $p$. Direct calculation shows that the definition is independent of a chosen chart around a critical point. 
A function $f\in C^2(S^{d-1},\R)$ is called a Morse function if all critical points of $f$ are nondegenerate. 
The definition of Whitney topology on $C^{\infty}(S^{d-1},\R)$ can be found in \cite[\S II.1]{Hirsch76book}.
We will use a known theorem that the set of Morse functions in $C^{\infty}(S^{d-1},\R)$ is a dense open set in the Whitney topology on $C^{\infty}(S^{d-1}, \R)$. For the reference see \cite[\S VI, Theorem~1.2]{Hirsch76book}.

One can find a thorough introduction to oscillatory integrals in \cite[\S 8]{Stein93book}. Definition of nondegeneracy of a critical point in the case $M=\R^d$ reduces to the statement that $f:\R^d\to \R$ has a nondegenerate critical point at $p\in \R^d$ if $\nabla f(p)=0$ and $\det Hf(p)\neq 0$.

We will need the following theorem that follows by combining the method of stationary phase \cite[\S 8, Proposition~6]{Stein93book} around a critical point with the method of nonstationary phase \cite[\S 8, Proposition~4]{Stein93book} away from it. 
\begin{theorem}
    \label{thm:stat_phase}
    Let $\psi\in C^{\infty}_c(\R^d,\R)$ and let $\Phi\in C^2(\R^d,\R)$ be a function that has a unique critical point on the support of the function $\psi$, call it $\xi_0$. If $\xi_0$ is a nondegenerate critical point of $\Phi$, the following holds
    \[\int_{\R^d}e^{it \Phi(\xi)}\psi(\xi) d\xi = Ce^{it\Phi(\xi_0)}t^{-\frac{d}{2}} + O(t^{-\frac{d}{2}-1}),\quad t\to\infty,\]
    where $C =\psi(\xi_0)(2\pi)^{\frac{d}{2}}e^{\frac{i\pi}{4}\operatorname{sgn}(H\Phi(\xi_0))}\abs{\det H\Phi(\xi_0)}^{-\frac{1}{2}}$ and $\operatorname{sgn}(H\Phi(\xi_0))$ denotes the number of positive eigenvalues minus the number of negative eigenvalues of the matrix $H\Phi(\xi_0)$.
\end{theorem}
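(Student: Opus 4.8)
The plan is to cut the integral into a piece localized near $\xi_0$ and a piece bounded away from it, and to feed the two pieces to the two cited results of Stein. Concretely, I would first fix $r>0$ small enough that the closed ball $\overline{B_d(\xi_0,r)}$ contains no critical point of $\Phi$ other than $\xi_0$ (this is possible since $\xi_0$ is the only critical point of $\Phi$ on $\supp\psi$ and the critical points away from $\xi_0$ form a closed set disjoint from $\{\xi_0\}$), choose $\chi\in C^{\infty}_c(B_d(\xi_0,r))$ with $\chi\equiv 1$ on $B_d(\xi_0,r/2)$, and split
\[
\int_{\R^d}e^{it\Phi(\xi)}\psi(\xi)\,d\xi=\underbrace{\int_{\R^d}e^{it\Phi(\xi)}\chi(\xi)\psi(\xi)\,d\xi}_{=:\,I_1(t)}+\underbrace{\int_{\R^d}e^{it\Phi(\xi)}\bigl(1-\chi(\xi)\bigr)\psi(\xi)\,d\xi}_{=:\,I_2(t)}.
\]

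For $I_2$, the amplitude $(1-\chi)\psi$ is smooth, compactly supported, and supported where $\nabla\Phi\neq 0$; hence the method of nonstationary phase \cite[\S 8, Proposition~4]{Stein93book} (iterated integration by parts against the transport operator $\tfrac{1}{it}\tfrac{\nabla\Phi\cdot\nabla}{\abs{\nabla\Phi}^{2}}$) gives $I_2(t)=O_{N}(t^{-N})$ for every $N\in\N$, in particular $I_2(t)=O(t^{-d/2-1})$ as $t\to\infty$. For $I_1$, the amplitude $\chi\psi\in C^{\infty}_c(B_d(\xi_0,r))$ and $\xi_0$ is the unique, and nondegenerate, critical point of $\Phi$ inside $B_d(\xi_0,r)$, so the stationary phase asymptotics \cite[\S 8, Proposition~6]{Stein93book} applies and produces
\[
I_1(t)=(\chi\psi)(\xi_0)\,(2\pi)^{d/2}e^{\frac{i\pi}{4}\operatorname{sgn}(H\Phi(\xi_0))}\abs{\det H\Phi(\xi_0)}^{-1/2}e^{it\Phi(\xi_0)}\,t^{-d/2}+O(t^{-d/2-1}).
\]
Since $\chi\equiv 1$ near $\xi_0$ one has $(\chi\psi)(\xi_0)=\psi(\xi_0)$, so the displayed main term is exactly $C e^{it\Phi(\xi_0)}t^{-d/2}$ with $C$ as in the statement, and adding the two estimates concludes the argument.

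The only delicate point is regularity: Stein's Propositions~4 and~6 are phrased for $C^{\infty}$ phases, whereas here $\Phi$ is assumed only $C^{2}$. For the stationary part one reduces to the smooth case via the $C^{2}$ Morse lemma, changing variables near $\xi_0$ so that the phase becomes $\Phi(\xi_0)+\tfrac12\langle H\Phi(\xi_0)y,y\rangle$; then $I_1$ becomes a Fresnel-type integral that one evaluates by diagonalizing $H\Phi(\xi_0)$, using the one-dimensional formula for $\int_{\R}e^{\pm i t y^{2}/2}a(y)\,dy$, and reading off the factor $\abs{\det H\Phi(\xi_0)}^{-1/2}$ from the Jacobian at $y=0$ and the phase $e^{\frac{i\pi}{4}\operatorname{sgn}(H\Phi(\xi_0))}$ from the signature. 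I expect this regularity bookkeeping — and, if one insists on the full $C^{2}$ generality, making sure the error terms in both pieces truly are $O(t^{-d/2-1})$ with only two derivatives available — to be the main technical nuisance; in every application of Theorem~\ref{thm:stat_phase} in this paper the relevant phase is in fact $C^{\infty}$, so in practice this issue does not arise.
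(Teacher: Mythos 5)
Your decomposition via a cutoff $\chi$ supported near $\xi_0$, with Stein's Proposition~4 handling the complementary piece and Proposition~6 handling the localized piece, is exactly what the paper intends: its ``proof'' is the single sentence preceding the statement, citing precisely this combination of the two propositions. Your observation about the $C^2$ hypothesis is a fair catch — Stein's error term $O(t^{-d/2-1})$ needs more than two derivatives of the phase — but since every phase to which Theorem~\ref{thm:stat_phase} is applied in the paper (namely $\Phi_x$ built from a $C^\infty$ function $\Phi$) is smooth, the mismatch lives only in the statement, not in its use.
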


\section{Proofs}
Theorem \ref{thm:intro} will essentially follow from the Theorem \ref{thm:main}, but before we proceed to the proof of the Theorem \ref{thm:main}, we prove the following technical lemma that is crucial for the proof of both theorems. It proves the existence of large set of $x$'s for which the modified phase in \eqref{eq:T_transf} below is stationary and nondegenerate.

\begin{lemma}
\label{lem:main}
    Let $d\in \N$, $d\ge 2$ and $\phi\in C^2(\R^{d-1},\R)$. Suppose that either $H\phi (0) > 0$ or $d=2$ and $\phi'(0)\neq 0$.
    For $x,\xi\in \R^d$ define
    \begin{equation}
        \label{eq:phi_def}
        \Phi_x(\xi):=\phi(\xi_{-}) + \xi_d(\langle \xi_{-},x_{-} \rangle +x_d).
    \end{equation}
    There exist an open set $U\in \R^d$ and an open set $V\subset \R^{d-1}\times (\frac{1}{4},4)$ for which there is a unique function $g:U\to V$ such that for all $x\in U$ it holds $\nabla_{\xi} \Phi_x(g(x)) = 0$ and the matrix $H_{\xi}\Phi_x(g(x))$ is regular.
\end{lemma}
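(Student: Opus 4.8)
The plan is to obtain $g$ from the implicit function theorem applied to the map $x\mapsto\nabla_{\xi}\Phi_x$, once a single ``seed'' point $(x^{0},\xi^{0})$ at which the critical point equation holds and the $\xi$-Hessian is invertible has been located.

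First I would differentiate \eqref{eq:phi_def}. Writing $\xi=(\xi_{-},\xi_d)$ and $x=(x_{-},x_d)$ one gets
\[
\nabla_{\xi}\Phi_x(\xi)=\bigl(\nabla\phi(\xi_{-})+\xi_d\,x_{-},\ \langle\xi_{-},x_{-}\rangle+x_d\bigr),
\qquad
H_{\xi}\Phi_x(\xi)=\begin{pmatrix} H\phi(\xi_{-}) & x_{-} \\ x_{-}^{\top} & 0 \end{pmatrix}.
\]
Thus $\nabla_{\xi}\Phi_x(\xi)=0$ amounts to $\nabla\phi(\xi_{-})=-\xi_d x_{-}$ together with $x_d=-\langle\xi_{-},x_{-}\rangle$, and the Hessian is a bordered matrix depending on $\xi$ only through $\xi_{-}$. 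By the Schur complement formula, whenever $H\phi(\xi_{-})$ is invertible one has $\det H_{\xi}\Phi_x(\xi)=-\det\bigl(H\phi(\xi_{-})\bigr)\,(x_{-})^{\top}H\phi(\xi_{-})^{-1}x_{-}$; when $d=2$ this degenerates to $\det H_{\xi}\Phi_x(\xi)=-x_1^{2}$, which is why no nondegeneracy of $\phi$ is needed in that case.

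Next I would build the seed point. Since $H\phi(0)>0$ forces $\nabla\phi$ to be nonconstant near the origin (resp.\ since $\phi'(0)\neq 0$ already when $d=2$), there is a point $\xi_{-}^{0}$ as close to $0$ as we wish with $\nabla\phi(\xi_{-}^{0})\neq 0$, and by continuity $H\phi(\xi_{-}^{0})>0$ in the first case. Put $\xi^{0}:=(\xi_{-}^{0},1)$, $x_{-}^{0}:=-\nabla\phi(\xi_{-}^{0})$ and $x_d^{0}:=-\langle\xi_{-}^{0},x_{-}^{0}\rangle$. Then $\nabla_{\xi}\Phi_{x^{0}}(\xi^{0})=0$ by construction and $x_{-}^{0}\neq 0$; consequently $\det H_{\xi}\Phi_{x^{0}}(\xi^{0})\neq 0$, in the first case because $H\phi(\xi_{-}^{0})>0$ makes $(x_{-}^{0})^{\top}H\phi(\xi_{-}^{0})^{-1}x_{-}^{0}>0$, and in the second because $\det H_{\xi}\Phi_{x^{0}}(\xi^{0})=-(x_1^{0})^{2}=-\phi'(\xi_1^{0})^{2}\neq 0$.

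Finally I would apply the implicit function theorem to $F(x,\xi):=\nabla_{\xi}\Phi_x(\xi)$, regarded as a $C^{1}$ map of $(x,\xi)\in\R^{d}\times\R^{d}$ into $\R^{d}$: since $D_{\xi}F(x^{0},\xi^{0})=H_{\xi}\Phi_{x^{0}}(\xi^{0})$ is regular, there are open neighborhoods $U\ni x^{0}$ and $W\ni\xi^{0}$ and a unique $C^{1}$ map $g:U\to W$ with $g(x^{0})=\xi^{0}$ and $\nabla_{\xi}\Phi_x(g(x))=0$ for all $x\in U$. Shrinking $U$, I can arrange simultaneously that $g(U)\subset\R^{d-1}\times(\tfrac14,4)$, because $g$ is continuous and the last coordinate of $\xi^{0}$ equals $1$, and that $\det H_{\xi}\Phi_x(g(x))\neq 0$ for every $x\in U$, because this quantity is continuous in $x$ and nonzero at $x^{0}$; then $V:=W\cap(\R^{d-1}\times(\tfrac14,4))$ works, and uniqueness of $g:U\to V$ with the stated properties follows from the uniqueness clause of the implicit function theorem. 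I expect the only genuinely non-mechanical point to be the construction of the seed: under the hypothesis $H\phi(0)>0$ one must step slightly away from the origin so that $x_{-}^{0}=-\nabla\phi(\xi_{-}^{0})\neq 0$, which is exactly what makes the bordered Hessian regular; the rest (the determinant identity and the standard neighborhood-shrinking arguments) is routine.
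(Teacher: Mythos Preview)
Your proposal is correct and follows essentially the same approach as the paper: compute $\det H_{\xi}\Phi_x$ via the Schur complement, locate a seed $(\xi^{0},x^{0})$ with $\xi_d^{0}=1$ and $x_{-}^{0}=-\nabla\phi(\xi_{-}^{0})\neq 0$, then invoke the implicit function theorem and shrink neighborhoods. The only cosmetic differences are that the paper appeals to the inverse function theorem (rather than mere nonconstancy of $\nabla\phi$) to produce the seed, and handles the $d=2$ case by solving $F=0$ explicitly instead of folding it into the same implicit function argument as you do.
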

\begin{proof}
    Suppose first that $H\phi(0)>0$.
    Define $F:\R^{2d}\to \R^d$ with:
    \[F(\xi,x)= \nabla_\xi \Phi_x(\xi) = \begin{bmatrix}
        \nabla\phi(\xi_{-}) +\xi_dx_{-}^{\top} & \xi_1x_1+\dots +\xi_{d-1}x_{d-1}+x_d
    \end{bmatrix}.\]
    We want to apply the implicit function theorem to prove the existence of function $g$ such that $F(g(x),x)=0$. First observe that
    \[ \nabla_\xi F(\xi,x) = 
    \begin{bmatrix}
        H\phi(\xi_{-}) & x_{-}\\
        x_{-}^{\top} & 0
    \end{bmatrix}.\]
    Applying determinant to both sides of the block-matrix identity
    \[
    \begin{bmatrix}
        I_{d-1} & 0\\
        -x_{-}^{\top}(H\phi(\xi_{-}))^{-1} & 1
    \end{bmatrix}
    \begin{bmatrix}
        H\phi(\xi_{-}) & x_{-}\\
        x_{-}^{\top} & 0
    \end{bmatrix}=
    \begin{bmatrix}
        H\phi(\xi_{-}) & x_{-}\\
        0 & -x_{-}^{\top}(H\phi(\xi_{-}))^{-1}x_{-}
    \end{bmatrix}
    \]
    it follows that 
    \[\det \nabla_\xi F(\xi,x) = -\langle (H\phi(\xi_{-}))^{-1}x_{-},x_{-}\rangle \det H\phi(\xi_{-}).\]
    Since $H\phi(0) > 0$, there exists $\eps>0$ such that $ H\phi(\xi_{-})>0$ for all $\xi_{-}\in B_{d-1}(0,\eps)$. Furthermore, since the inverse of a positive definite matrix is positive definite, we conclude that $\nabla_\xi F(\xi,x)$ is regular whenever $\xi_{-}\in B_{d-1}(0,\eps)$ and $x_{-}\neq 0$.

    In order to apply the implicit function theorem, we need to show that there exist $\xi^0\in B_d(0,\eps)\times (\frac{1}{4},4)$ and $x^0\in \R^d$ with $x^0_{-}\neq 0$ such that $F(\xi^0,x^0)=0$.
    
    Since $H\phi(0)$ is regular, because of the inverse function theorem there exist an open set $A\subset B_{d-1}(0,\eps)$ and an open set $B\subset \R^{d-1}$, such that $\nabla \phi : A\to B$ is bijective. Taking $x^0_{-}\in (-B)\setminus\{0\}$ and $\xi^0_{-}$ such that $\nabla\phi(\xi^0_{-})=-x^0_{-}$ and defining: 
    \[\xi^0 = (\xi^0_{-},1), \quad x^0 = \left( x^0_{-}, -\langle \xi_{-}^{0},x^{0}_{-}\rangle \right), \] 
    we can see that $F(\xi^0,x^0)=0$. 
    
    Therefore, the implicit function theorem implies the existence of open sets $U'\ni x^0$ and $V\subset B(0,\eps)\times (\frac{1}{4},4)$ (the second inclusion follows from the fact that $\xi^0_d=1$ by shrinking $U'$ if necessary) and a unique function $g:U'\to V$ such that $F(g(x),x)=0$ for every $x\in U'$. If we choose $U$ to be a subset of $U'$ such that all $x\in U$ satisfy $x_{-}\neq 0$, the regularity of $H_\xi\Phi_x$ follows from the fact that $H_{\xi}\Phi_y(\xi) = \nabla_{\xi} F(\xi ,x)$ and the previous conclusion of regularity of $\nabla_{\xi} F(\xi ,x)$.

    When $d=2$ and $\phi'(0)\neq 0$, solving
    \[F(\xi, x)= \begin{bmatrix}
        \phi'(\xi_1)+\xi_2x_1 & \xi_1x_1+x_2
    \end{bmatrix} = 0,\]
    one can see that the unique function $g:\R^2\setminus(\{0\}\times \R)\to \R^2$ for which $F(g(x),x)=0$ is given by 
    \[g(x_1,x_2) = \left(-\frac{x_2}{x_1}, -\frac{\phi'(-\frac{x_2}{x_1})}{x_1}\right).\] 
    Also, observe that the matrix 
    \[\nabla_{\xi}F(\xi,x) = 
    \begin{bmatrix}
     \phi''(\xi_1) & x_1\\
     x_1 & 0
    \end{bmatrix}\]
    is regular whenever $x_1\neq 0$, regardless of $\phi''(\xi_1)$. 
    However, to satisfy the assumption that $V\subset \R^{d-1}\times (\frac{1}{4},4)$, one has to restrict $x$ to a smaller set. 
    Without loss of generality, we may assume that $c:=\phi'(0) > 0$. By continuity, there exists $\delta>0$ such that $\phi'(\xi_1)\in (\frac{c}{2},2c)$ for all $\xi_1\in (-\delta, \delta)$. 
    Therefore, if $x_1\in (-2c, -\frac{c}{2})$ and $x_2\in (-\frac{c\delta}{2}, \frac{c\delta}{2})$, then one has $-\frac{x_2}{x_1}\in (-\delta, \delta)$ and
    $\frac{-\phi'(-\frac{x_2}{x_1})}{x_1}\in (\frac{1}{4},4)$, giving the proof of the theorem with $U=(-2c, -\frac{c}{2})\times (-\frac{c\delta}{2}, \frac{c\delta}{2})$ and $V=\R\times (\frac{1}{4},4)$.
\end{proof}

\begin{proof}[Proof of Theorem \ref{thm:main}]
Using duality of $L^p$ spaces and the fact that 
\[\langle T_{\Phi}^t u,v \rangle  = \langle u, T^{-t}_{\Phi}v\rangle = \langle T_{\Phi}^t \Tilde{v},\Tilde{u} \rangle,\]
where $\Tilde{u}(x):=\overline{u(-x)}$, we can, without loss of generality, assume that $t\ge 0$ and $p\in (1,2]$.

By composing the function $\Phi$ with the rotation, if necessary, we can assume that the function $\Phi\vert_{S^{d-1}}$ has a local minimum at $e_d:=(0,\dots, 0,1)\in\R^d$. From the fact that it is nondegenerate, we know that the function $\phi:\R^{d-1}\to \R$ defined as restriction of $\Phi$ to the hyperplane $\langle \xi,e_d \rangle = 1$: 
\[\phi(\xi_1,\dots, \xi_{d-1}):=\Phi(\xi_1,\dots, \xi_{d-1},1)\]
satisfies $H\phi(0) >0$. Indeed, observing that
\[\phi(\xi_{-})= \Phi\left( \xi_{-}, 1 \right) =\Phi\vert_{S^{d-1}}\left( \frac{\xi_{-}}{\sqrt{1+\abs{\xi_{-}}^2}}, \frac{1}{\sqrt{1+\abs{\xi_{-}}^2}} \right),\]
the statement follows by the definition of nondegeneracy of $\Phi\vert_{S^{d-1}}$ and the fact that the function $\xi_{-}\mapsto \frac{1}{\sqrt{1+\abs{\xi_{-}}^2}}(\xi_{-},1)$ is the inverse of the chart of $S^{d-1}$ at $e_d$ given as $\xi\mapsto \frac{1}{\xi_d}\xi_{-}$.  

To reduce the integral to the correct form for application of the method of stationary phase, observe that
\[T_{\Phi}^tf(tx) =\int_{\R^d}e^{it(\Phi(\xi)+i\xi\cdot x)}\widehat{f}(\xi)d\xi.\]

Furthermore, observe that for any $\xi\in \R^{d-1}\times(0,\infty)$ one has $\Phi(\xi) = \phi(\frac{1}{\xi_d}\xi_{-})$. Since the function $\Lambda(\xi):=\xi_d(\xi_{-},1)$ is a $C^{\infty}$ diffeomorphism from $\R^{d-1}\times \R_{+}$ onto itself,  for any $f$ such that $\operatorname{supp}\widehat{f} \subset \R^{d-1}\times  \R_{+}$ the change of variables $\xi = \Lambda(\xi')$ gives
\begin{equation}
    \label{eq:T_transf}
    T_{\Phi}^tf(tx) = \int_{\R^d} e^{it(\phi(\xi_{-}) + \xi_d(\langle \xi_{-},x_{-} \rangle +x_d))}\widehat{f}(\xi_d\xi_{-},\xi_d)\xi_d^{d-1}d\xi.
\end{equation}

Let $U$,$V$ and $g$ be as in Lemma \ref{lem:main}. Since $\Lambda$ a $C^{\infty}$ diffeomorphism on $V$ and $V$ is open, there exist $\xi_0\in \Lambda(g(U))$ and a ball $B_d(\xi_0,\eps)\subset \Lambda(V)$. 
We choose a function $f\in \mathcal{S}(\R^d)$ such that $\supp\widehat{f}\subset B_d(\xi_0,\eps)$ and $\widehat{f}(\xi)=1$ for $\xi \in B_d(\xi_0,\frac{\eps}{2})$. Denoting $F(\xi):=\widehat{f}(\Lambda(\xi))\xi_d^{d-1}$ and  
\[U_1=\{x\in U: \Lambda(g(x))\in B_d(\xi_0,\frac{\eps}{2})\},\] 
the continuity of $\Lambda \circ g$ and the fact that $\xi_d\sim 1$ on $V$ imply that $U_1$ is an open set such that $\abs{F\circ g\vert_{U_1}}\gtrsim 1$. 
Denoting $\Phi_x(\xi)$ as in \eqref{eq:phi_def}, the existence and uniqueness of the function $g$ in Lemma \ref{lem:main} imply that for any $x\in U$, the function $\Phi_x$ has a unique stationary point in  $V\supset \supp F$. 
Therefore, from \eqref{eq:T_transf}, Theorem \ref{thm:stat_phase} and the lower bound for $F\circ g$ on $U_1$, we have
\begin{equation}
\label{eq:lower}
    \begin{split}
            \int_{\R^d} \abs{T_{\Phi}^tf(tx)}^p dx 
        & \ge \int_{U}\abs{T_{\Phi}^tf(tx)}^p dx\\
        & = \int_{U} \abs*{\int_{\R^d} e^{it\Phi_x(\xi)}F(\xi)d\xi}^p dx\\
        &= \int_{U} \abs*{t^{-\frac{d}{2}}(2\pi)^{\frac{d}{2}}F(g(x))\abs{\det H\Phi(g(x))}^{-\frac{1}{2}} + O_x(t^{-\frac{d}{2}-1})}^pdx\\
        &\gtrsim_{d,p} t^{-\frac{dp}{2}}\int_{U_1}\abs*{\abs{\det H\Phi(g(x))}^{-\frac{1}{2}} + O_x(t^{-1})}^p dx,
    \end{split}
\end{equation}
Taking the $p$-th root and applying Fatou's lemma, we have:
\begin{align*}
    \liminf_{t\to\infty} \frac{\norm{T_{\Phi}^tf}_{L^p}}{t^{\frac{d}{p}-\frac{d}{2}}} 
    &= \liminf_{t\to\infty} \frac{\norm{T_{\Phi}^tf(t\cdot)}_{L^p}}{t^{-\frac{d}{2}}} \\
    &\gtrsim_{d,p} \liminf_{t\to\infty} \left(\int_{U_1}\abs*{\abs{\det H\Phi(g(x))}^{-\frac{1}{2}} + O_x(t^{-1})}^p dx \right)^{\frac{1}{p}} \\
    &\gtrsim_{d,p} \left(\int_{U_1}\abs*{\abs{\det H\Phi(g(x))}^{-\frac{1}{2}}}^p dx \right)^{\frac{1}{p}}\\
    &\gtrsim_{d,p,\Phi} 1.
\end{align*}
Since $f$ was fixed, one has $\norm{f}_{L^p}\sim_p 1$, so the calculation implies that
\[\liminf_{t\to\infty}\frac{\norm{T_{\Phi}^t}_{L^p\to L^p}}{t^{\frac{d}{p}-\frac{d}{2}}}\ge \liminf_{t\to\infty}\frac{\norm{T_{\Phi}^tf}_{L^p}}{\norm{f}_{L^p} t^{\frac{d}{p}-\frac{d}{2}}}\gtrsim_{d,p,\Phi} 1,\]
giving the proof of the theorem for $t\ge M$, where $M\in \R_{+}$ is an absolute constant. 

The case $t\in [0,M]$, can be proved using soft methods. Fix any nonzero function $f\in \mathcal{S}(\R^d)$. The fact that the operator $T_{\Phi}^t$ is a unitary operator on $L^2$ for any $t\in \R$ implies that $T_{\Phi}^tf$ is not a zero function for any $t\in \R$. Therefore, for all $t\in \R$ one has $\norm{T_{\Phi}^tf}_{L^p}>0$. 
Furthermore, for any fixed $x\in \R^d$ and $t_0\in\R$, from \eqref{eq:T_def} and Lebesgue's dominated convergence theorem we have 
\[\lim_{t\to t_0} T_{\Phi}^tf(x) = T_{\Phi}^{t_0}f(x).\]
Fatou's lemma then implies that
\[\lim_{t\to t_0} \norm{T_{\Phi}^tf}_{L^p} \ge \norm{T_{\Phi}^{t_0}f}_{L^p},\]
meaning that the function $t\mapsto \norm{T_{\Phi}^tf}_{L^p}$ is lower semicontinuous. Lower semicontinuous function attains a minimum on the compact interval $[0,M]$ and it must be positive by the previous observation. Finally, using the fact that $\jap{t}^{\frac{d}{p}-\frac{d}{2}} \sim_{M} 1$ for $t\in [0,M]$, one gets the required lower bound in the range $[0,M]$, giving the proof of the theorem. 
\end{proof}

Finally, we prove Theorem \ref{thm:intro}.
\begin{proof}[Proof of Theorem \ref{thm:intro}] 
    Let us prove the part \eqref{thm:intro_a}.
    Since $\Phi\vert_{S^{d-1}}$ is not constant, there exist a point $\xi_0\in S^{d-1}$ and a chart $\psi$ at $\xi_0$ for which $\nabla (\Phi\vert_{S^{d-1}}\circ \psi^{-1}) (\psi(\xi_0))\neq 0$.
    By composing the function $\Phi$ with rotation, if necessary, we can assume that $\xi_0= e_2$ implying that $\partial_1\Phi(e_2)\neq 0$. Defining $\phi$ as in the proof of Theorem \ref{thm:main}, Lemma \ref{lem:main} gives the same conclusion needed to repeat the the proof of the Theorem \ref{thm:main} verbatim, thus proving the part \eqref{thm:intro_a}.

    We continue to the proof of part \eqref{thm:intro_b}.
    Let $\phi:S^{d-1}\to\R$ be any Morse function on the sphere. Since sphere is a compact set, it has a minimum and the fact that function is Morse implies that the minimum is nondegenerate. Applying the Theorem \ref{thm:main} to the function $\Phi(\xi):=\phi(\frac{\xi}{\abs{\xi}})$, we can see that all Morse functions satisfy the required asymptotics. From \cite[\S VI, Theorem~1.2]{Hirsch76book} we know that the set of Morse functions is an open dense set in the standard topology on $C^{\infty}(S^{d-1},\R)$, so the statement follows.
\end{proof}

\section{Closing remarks}

\begin{remark}
    The following question remains open and it could be interesting.

    When $d\ge 3$, does there exist a nonconstant unimodular homogeneous function $\Phi\in C^{\infty}(\R^d\setminus\{0\},\R)$ of degree 0 such that
    \[\liminf_{t\to\infty} \frac{\norm{T_\Phi^t}_{L^p\to L^p}}{t^{d\abs{\frac{1}{2}-\frac{1}{p}}}}=0 ?\]

\end{remark}

\begin{remark}
    When $d=2$, one can modify the approach from the proof of \cite[Theorem~3]{BK23} to prove the sharp estimate both in $p$ and $t$ for all nonconstant 0-homogeneous unimodular functions $\Phi\in C^{\infty}(S^1,\R)$, but it is not clear how to extend that approach to $d \ge 3$ nor it is apparent how to modify the current approach to imply the sharpness of the estimate both in $p$ and $t$.
\end{remark}


\section*{Acknowledgements}
I would like to thank my advisor Vjekoslav Kova\v{c} for numerous insightful discussions and meticulous reading of the paper. I would also like to thank Matko Grbac for suggesting a simple block matrix identity in Lemma \ref{lem:main} that made the presentation cleaner.


\bibliography{lower_bounds}{}
\bibliographystyle{plain}

\end{document}